\makeatletter \@namedef{subjclassname@2010}{%
  \textup{2010} Mathematics Subject Classification}
\newcounter{thm} \numberwithin{thm}{section}
\newtheorem{Theorem}[thm]{Theorem}
\newtheorem{Lemma}[thm]{Lemma}
\newtheorem{Corollary}[thm]{Corollary}
\newtheorem{Conjecture}[thm]{Conjecture}
\newcommand{\NN}[0]{\mathbb N}
	\newcommand{\QQ}[0]{\mathbb Q}
	\newcommand{\RR}[0]{\mathbb R}
	\newcommand{\ZZ}[0]{\mathbb Z}
	\newcommand{\fS}[0]{\mathfrak S}
\newcommand{\eps}[0]{\varepsilon}
\renewcommand{\mod}[1]{\ (\text{mod }#1)}
\newcommand{\lr}[1]{\left(#1\right)}
\begin{document}

% %%%% To ease editing, for IMPAN journals add:

\baselineskip=17pt

% %%%%%%%%%%

% % In the running head, replace first names by initials % and give an
% abbreviation of the title.

\title{Additive Correlation and the Inverse Problem for the Large Sieve}

\author[Brandon Hanson]{Brandon Hanson} \address{Pennsylvania State University\\
University Park, PA}
\email{bwh5339@psu.edu}
\date{}
\begin{abstract}
Let $A\subset [1,N]$ be a set of positive integers with $|A|\gg \sqrt N$. We show that if avoids about $p/2$ residue classes modulo $p$ for each prime $p$, the $A$ must correlate additively with the squares $S=\{n^2:1\leq n\leq \sqrt N\}$, in the sense that we have the additive energy estimate
\[E(A,S)\gg N\log N.\] This is, in a sense, optimal.
\end{abstract}
\maketitle
\section{Introduction}
Let $A\subset [1,N]$ be a set of positive integers. For a positive integer $v$, let \[A_v=\{a\mod v: a\in A\}\subseteq \ZZ/v\ZZ\] denote the set of residue classes covered by $A$ modulo $v$, and let \[A(v;h)=\{a\in A: a\equiv h\mod p\}\subseteq A\] denote the set of elements of $A$ congruent to $h$ modulo $v$. The Large Sieve of Linnik (and developed further by others, notably Montgomery \cite{M}) and the Larger Sieve of Gallagher are useful tools when trying to estimate the cardinality of sets $A$ for which $|A_p|$ is small for primes $p$, see for instance \cite[Chapter 9]{FI}. In fact, under the hypothesis that for each $p$, $|A_p|\leq \alpha p$ the Larger Sieve tells us the remarkably strong result $|A|\ll N^\alpha$ in a fairly elementary way.  When $\alpha$ is about $1/2$, the estimate $|A|\ll N^{1/2}$ is sharp. This can be seen by taking $A$ to consist of the perfect squares up to $N$, or any other dense quadratic sequence. The quadratic case of the ``Inverse Conjecture for the Large Sieve" states that such quadratic sequences are the unique extremizers. Here is a fairly crude form of such the conjecture.
\begin{Conjecture}[Quadratic Inverse Large Sieve]\label{Conjecture}
\footnote{In this article we make frequent use of the asymptotic notation $X\ll Y$ which means that $|X|\leq c|Y|$ for some constant $c$, or equivalently $X=O(Y)$. We also mean }
Let $A\subset [1,N]$ be a set of integers such that $|A_p|\leq (p+1)/2$ for each $p$ and $|A|\gg N^{1/2}$. Then there is a subset $A'\subseteq A$ of size $|A'|\geq \frac{9}{10}|A|$ and such that $A'\subseteq q(\ZZ)$ for some quadratic polynomial $q(x)\in\QQ[x]$.
\end{Conjecture}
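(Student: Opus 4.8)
The plan is to split the conjecture into two stages: first convert the local, prime-by-prime hypotheses into a single global statement of additive correlation with the squares, and then upgrade that correlation into the rigid algebraic conclusion that a large piece of $A$ sits on one quadratic. The first stage is exactly the main theorem of this paper. To motivate why this is the right reduction, note that for odd primes $p$ the squares occupy precisely $(p+1)/2$ residue classes modulo $p$, so the hypothesis $|A_p|\leq (p+1)/2$ says that, prime by prime, the local data of $A$ is consistent with $A$ being a subset of a quadratic image. The large sieve machinery is designed to turn such prime-by-prime agreement into genuine global overlap, and here that overlap is measured by $E(A,S)$ with $S=\{n^2:1\leq n\leq\sqrt N\}$. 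I would therefore take $E(A,S)\gg N\log N$ as the starting point. This bound is extremal in a strong sense: the squares themselves satisfy $E(S,S)\asymp N\log N$, while the trivial Cauchy--Schwarz bound only gives $E(A,S)\gg N$, so the theorem asserts that $A$ correlates with the squares as strongly as the squares correlate with themselves.

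For the second stage I would try to recover the polynomial from the extremal additive profile. The essential difficulty, and the reason the usual toolkit does not immediately apply, is that the gain over the trivial bound is only a single logarithmic factor. Robust structure theorems such as Balog--Szemer\'edi--Gowers or Freiman require a polynomial-strength energy surplus and do not detect a logarithmic gain; moreover the squares carry additive energy only a logarithmic factor above the minimal possible value $|S|^2\asymp N$, so the structure to be extracted is not of generalised-arithmetic-progression type at all. Instead I would study the representation function $r_{A-S}(d)=\#\{(a,s)\in A\times S: a-s=d\}$ directly and aim to show it concentrates on the values of $d$ forced by a quadratic. Here the factorisation $m^2-n^2=(m-n)(m+n)$ of differences of squares is the natural lever: for a genuine quadratic $q$, the differences $q(m)-q(n)$ factor in a rigidly controlled way, and one would try to reconstruct such a factorisation structure on a positive-density subset $A'\subseteq A$ and then align it to a single polynomial $q(x)\in\QQ[x]$.

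The step I expect to be the main obstacle is precisely this passage from a logarithmic-level additive gain to rigid algebraic structure. Additive correlation alone cannot distinguish a true quadratic image from an artificially constructed set with the same additive profile, so the quadratic rigidity must be forced by reinjecting the multiplicative residue hypotheses at many primes simultaneously. Reconciling one global statement living at the scale of $N$ with infinitely many local conditions, one for each prime $p$, while only a logarithmic factor of room is available, is the crux of the matter; this is why the additive energy estimate is accessible whereas the full inverse conjecture remains open.
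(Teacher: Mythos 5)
There is a fundamental issue here: the statement you were asked about is a \emph{Conjecture}, and the paper does not prove it --- nor does your proposal. The paper itself is explicit that its main theorem (the energy bound $E(A,S)\gg |A||S|+|A|^2\log N$) is ``far weaker than Conjecture \ref{Conjecture} would give''; it supplies only an $\omega$-result, namely a single quadratic $q$ with $|q(\ZZ)\cap A|\gg \log N$. Your proposal's ``first stage'' is just a restatement of that theorem, and your ``second stage'' --- upgrading the logarithmic energy gain to a subset $A'$ with $|A'|\geq \frac{9}{10}|A|$ lying in one quadratic image --- is never carried out. You describe what one would want to do (study $r_{A-S}$, exploit the factorisation $m^2-n^2=(m-n)(m+n)$, reinject the local conditions at all primes) and then correctly identify that this passage is the crux and remains open. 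An outline that names its own missing step is not a proof; the entire content of the conjecture lives in that step.

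Moreover, the gap is not merely unfilled but unfillable by the route you sketch, at least from the energy bound alone. The estimate $E(A,S)\gg N\log N$ is consistent with $r_{A+S}(n)\asymp \log N$ spread over $\asymp N/\log N$ values of $n$, which pins only $O(\log N)$ elements of $A$ to any single quadratic $n-x^2$ --- exponentially short of the $\gg \sqrt N$ elements the conjecture demands on one quadratic. You acknowledge this yourself when you note that ``additive correlation alone cannot distinguish a true quadratic image from an artificially constructed set with the same additive profile.'' To make progress one would need a genuinely new mechanism coupling the global energy statement to the infinitely many local constraints $|A_p|\leq (p+1)/2$ simultaneously; no such mechanism appears in your proposal, in this paper, or (as the paper's references \cite{GH, HV, W1, W2} indicate) in the literature to date. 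The honest assessment of your submission is that it is a well-informed discussion of why the conjecture is hard, not a proof of it.
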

\noindent Further reading about this conjecture can be found in \cite{CL, GH, HV, W1, W2}.

Let $X$ and $Y$ be two finite sets of integers. The additive energy between $X$ and $Y$ is the quantity
\[E(X,Y)=|\{(x_1,x_2,y_1,y_2)\in X\times X\times Y\times Y:x_1+y_1=x_2+y_2\}|.\]
This is a quantity intimately related with the sumset
\[X+Y=\{x+y:x\in X, y\in Y\}.\]
For context, the trivial estimates for additive energy are
\[|X||Y|\leq E(X,Y)\leq |X||Y|\min\{|X|,|Y|\}.\] We will write $r_{X+Y}(n)$ and $r_{X-Y}(n)$ for the number of solutions to $x+y=n$ and $x-y=n$, respectively, with $x\in X$ and $y\in Y$. A few moments thought reveals the following formulas:
\begin{equation}\label{Energy1}
E(X,Y)=\sum_{n}r_{X+Y}(n)^2
\end{equation} and
\begin{equation}\label{Energy2}
E(X,Y)=\sum_n r_{X-X}(n)r_{Y-Y}(n).
\end{equation}
The main theorem of this article is the following.
\begin{Theorem}\label{MainTheorem}
Suppose $A$ is a set of at least positive integers in the interval $[1,N]$ satisfying the condition
\[|A_p|\leq \frac{p}{2}+\eps(p)\] for all primes $p$ and some uniformly bounded sequence $\eps(p)\geq 0$. Let $S$ denote the set of perfect squares up to $N$. We have
\[E(A,S)\gg |A||S|+|A|^2\log N.\] In particular, if $|A|\gg \sqrt{N}$, then
\[E(A,S)\gg |A||S|\log N.\]
\end{Theorem}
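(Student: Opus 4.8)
The plan is to work entirely from the difference form \eqref{Energy2}, namely $E(A,S)=\sum_n r_{A-A}(n)r_{S-S}(n)$, separating the diagonal from the off-diagonal. The term $n=0$ contributes exactly $r_{A-A}(0)r_{S-S}(0)=|A||S|$, which already yields the first half of the claimed bound; the entire difficulty lies in showing that the remaining sum is $\gg |A|^2\log N$. So after this splitting the target becomes $\sum_{n\ne 0}r_{A-A}(n)r_{S-S}(n)\gg |A|^2\log N$.

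The next step is to make the arithmetic of the squares explicit. Writing a representation $n=a^2-b^2=de$ with $d=a-b$ and $e=a+b$ shows that, for $n>0$, $r_{S-S}(n)$ counts the factorizations $n=de$ with $d<e$, $d\equiv e\pmod 2$ and $d+e\le 2\sqrt N$; equivalently it counts the divisors $d\mid n$ in the window $[\,n/(2\sqrt N),\sqrt n\,]$ (of the correct parity). Substituting this and exchanging the order of summation (using the symmetry $r_{A-A}(-n)=r_{A-A}(n)$ to fold $n<0$ onto $n>0$) rewrites the off-diagonal part as a weighted count over moduli,
\[
E(A,S)-|A||S|\ \gg\ \sum_{d\le \sqrt N}\#\{(a,a')\in A\times A:\ d\mid (a-a'),\ d^2< a-a'\le 2d\sqrt N\},
\]
after harmlessly restricting to odd $d$ and odd differences.

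The hypothesis enters through the residue counts $A(d;h)$. For any $d$, Cauchy--Schwarz gives $\sum_h |A(d;h)|^2\ge |A|^2/|A_d|$, while the sieve condition $|A_p|\le p/2+\eps(p)$ together with the submultiplicativity $|A_d|\le\prod_{p\mid d}|A_p|$ (valid for squarefree $d$ by the Chinese Remainder Theorem) yields $|A_d|\ll d/2^{\omega(d)}$, where $\omega(d)$ is the number of distinct prime factors of $d$ and the implied constant is harmless after averaging since $\eps$ is uniformly bounded. Hence $\#\{(a,a'):d\mid (a-a')\}=\sum_h|A(d;h)|^2\gg 2^{\omega(d)}|A|^2/d$. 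Feeding these lower bounds into the display above and summing over the full range $d\le\sqrt N$, the extra logarithm is produced by the elementary mean value $\sum_{d\le x}2^{\omega(d)}=\sum_{d\le x}\sum_{e\mid d}\mu^2(e)\gg x\log x$: a single factor $\log N$ appears precisely because the multiplicative saving $2^{\omega(d)}$ at \emph{composite} moduli, aggregated over all scales, beats the mere $\log\log N$ that prime moduli alone would contribute (each prime $p$ only gives $\gg |A|^2/p$, and $\sum_p 1/p$ is doubly logarithmic).

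The main obstacle is the coupling between the size of the modulus $d$ and the size of the differences it may divide: the difference-of-squares weight forces $a-a'$ into the window $[d^2,2d\sqrt N]$, whereas the sieve inequality controls only the total number of pairs with $d\mid (a-a')$, regardless of size. I would handle this by a dyadic decomposition of the differences, noting that for $a-a'\in(M,2M]$ the admissible divisors form the fixed interval $(M/\sqrt N,\sqrt M]$, and then arguing that over the full range of dyadic scales the divisible pairs cannot systematically avoid their admissible windows. In the equivalent circle-method formulation $E(A,S)=\int_0^1 |f(\theta)|^2|g(\theta)|^2\,d\theta$, with $f(\theta)=\sum_{a\in A}e(a\theta)$, $g(\theta)=\sum_{s\in S}e(s\theta)$ and $e(x)=e^{2\pi i x}$, this is exactly the problem of extracting the contribution of the composite-denominator major arcs (where $|g|^2\sim N/q$ from the Gauss sums and $|f|^2$ carries mass forced by the residue counts) rather than only the prime-denominator arcs; securing that localization, using the uniform bound on $\eps$ and the density hypothesis $|A|\gg\sqrt N$, is the crux of the argument.
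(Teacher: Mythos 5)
Your proposal assembles the same ingredients as the paper---the difference form of the energy, the identification of $r_{S-S}(n)$ with divisors of $n$ in a window, the Cauchy--Schwarz/CRT sieve bound at composite squarefree moduli giving a $2^{\omega(d)}$-type saving, and a multiplicative mean value to produce the logarithm---but it stops exactly at the step that constitutes the real content of the proof, and you say so yourself (``securing that localization \ldots is the crux of the argument''). The gap is genuine and not a formality: your sieve inequality $\sum_h|A(d;h)|^2\ge |A|^2/|A_d|$ bounds from below the number of \emph{all} pairs with $d\mid(a-a')$, whereas the divisor window forces $d^2<a-a'\le 2d\sqrt N$, and the windowed count is \emph{smaller} than the unwindowed one, so the substitution goes the wrong way. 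Nor can the window be discarded at the cost of a constant: already for $A=S$ the unwindowed quantity $\sum_{d\le\sqrt N}\#\{(a,a'):d\mid(a-a')\}\asymp N(\log N)^2$, while $E(S,S)\sim\frac{1}{4}N\log N$, so the window loses a full logarithm and any correct argument must engage with it. The paper closes this gap with a localization device absent from your sketch: indexing each factorization by its \emph{larger} divisor $v$, it partitions $[1,N]$ into the $J_v=[N/v^2]$ intervals $I_j=[jv^2,(j+1)v^2)$; two congruent elements of $A$ lying in the same $I_j$ automatically have difference $uv$ with $0\le u<v$, so the window condition comes for free. One then applies the composite-modulus sieve bound to each piece $A_j=A\cap I_j$ separately and recombines by Cauchy--Schwarz, $\sum_j|A_j|^2\ge |A|^2/J_v\ge v^2|A|^2/N$, so the localization costs only the factor $v^2/N$, and $\frac{|A|^2}{N}\sum_{v\le\sqrt N}v^2/\Delta(v)\gg|A|^2\log N$ finishes the proof. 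Your dyadic-decomposition and circle-method suggestions are plausible directions, but neither is carried out, and the dyadic version would still need precisely this kind of interval-splitting to couple moduli to difference sizes.

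Two further repairs would be needed even after the main gap is filled. First, restricting to odd differences is not harmless: the set $A=\{4k^2:1\le k\le\sqrt N/2\}$ satisfies every hypothesis of the theorem and has no odd differences at all, so your off-diagonal lower bound would degenerate to $0$; the paper instead passes to a subset of $A$ in a single class modulo $4$ (so all differences are divisible by $4$) and removes the parity condition $u\equiv v\ (\text{mod }2)$ by the at-most-$4$-to-$1$ rescaling $(u,v)\mapsto(u/2,2v)$ or $(2u,v/2)$. Second, the pointwise bound $|A_d|\ll d/2^{\omega(d)}$ with a uniform constant is false: $|A_d|\le\prod_{p\mid d}(p/2+\eps(p))=\frac{d}{2^{\omega(d)}}\prod_{p\mid d}(1+2\eps(p)/p)$, and the last product can grow like a power of $\log\log N$ when $d$ has many small prime factors. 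This is why the paper does not use a pointwise estimate but runs the average $\sum_{v\le x}\mu(v)^2/\Delta(v)$ through the Friedlander--Iwaniec mean-value theorem for multiplicative functions (its Corollary \ref{AverageDelta}), where the $(1+O(1/p))$ corrections are absorbed into a convergent singular series.
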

At first glance, a factor of $\log N$ away from the trivial bound appears quite weak, so before proceeding, here are a few remarks about this theorem.

Firstly, a logarithmic factor is non-trivial. This can be observed by considering dense Sidon subsets of $[1,N]$. Recall a set $x$ is called Sidon if all of its sums are distinct. It is well-known that there are Sidon subsets $X$ of $[1,N]$ of cardinality about $\sqrt N$, the existence of which was proved in \cite{BC}. For such sets $X$, $r_{X-X}(n)\leq 1$ for $n\neq 0$, so that by (\ref{Energy2})
\[E(X,S)=|X||S|+\sum_{n\neq 0}r_{X-X}(n)r_{S-S}(n)\leq |X||Y|+\sum_{n}r_{S-S}(n)=|S|(|X|+|S|)\ll N.\] 
Now if we consider arbitrary subsets $X,Y\subseteq [1,N]$ of integers then $X+Y\subseteq[2,2N]$, we have the obvious estimate $|X+Y|\leq 2N-1$. The Cauchy-Schwarz inequality gives
\[|X||Y|\leq (2N-1)\sum_{2\leq n\leq 2N} r_{X+Y}(n)^2=(2N-1)E(X,Y),\] showing that sets $X$ and $Y$ which are a little larger than $\sqrt N$ in size necessarily have some additive energy between them. So, while sets at the $\sqrt N$-threshold need not have any substantial additive correlation, they only just fail to do so. However, the squares and other sets which avoid residue classes are biased in arithmetic progressions, which are after all cosets of subgroups of $\ZZ$, and so this bias hints at a bit of underlying additive structure. 

Secondly, this logarithmic factor is interesting in that it is what should be expected if Conjecture \ref{Conjecture} were to hold, and so is in that sense best possible. Indeed, a well-known theorem of Ramanujan in \cite{R} states
\begin{equation}\label{Ramanujan}
E(S,S)=\sum_{1\leq n\leq N}r_{S+S}(n)^2\sim\frac{1}{4}N\log N.
\end{equation}
By (\ref{Energy2}), Cauchy-Schwarz, and (\ref{Energy1}), we have for any sets of integers $X$ and $Y$,
\begin{equation}\label{EnergyCauchySchwarz}
E(X,Y)^2\leq E(X,X)E(Y,Y).
\end{equation} If we believe Conjecture \ref{Conjecture}, then $A$ should look like a quadratic sequence $a=sx_a^2+t$ for rational $s$ and $t$ (by completing the square) - so $A$ is an affine transform of the squares. Inequality \ref{EnergyCauchySchwarz} shows that for $u\neq 0$ and $v$ arbitrary
\[E(X,uX+v)^2\leq E(X,X)E(uX+v,uX+v)=E(X,X)^2\]
so that the logarithmic factor of Theorem \ref{MainTheorem} is the most one could hope prove.

Finally, we mention that the inequality (\ref{EnergyCauchySchwarz}) also shows that among all sets $Y$ with comparable of additive energy to $X$, $X$ is essentially the best ``additive partner" for itself in that $E(X,Y)$ is maximized when $Y=X$, provided $Y$ has additive structure comparable to $X$'s. From this, Theorem \ref{MainTheorem} is supporting Conjecture \ref{Conjecture}.

One corollary from Theorem \ref{MainTheorem} is that, at the very least, there are at least $\log N$ elements of $A$ in the image of a single quadratic. Thus we have proved an $\omega$-result. This is far weaker than Conjecture \ref{Conjecture} would give, but it is a positive result in this direction.
\begin{Corollary}
Suppose $A$ is a set of positive integers in the interval $[1,N]$ satisfying the condition
\[|A_p|\leq \frac{p}{2}+\eps(p)\] for all primes $p$ and some uniformly bounded sequence $\eps(p)\geq 0$. If $|A|\gg \sqrt N$, then there is a rational quadratic $q(x)\in \QQ[x]$ such that $|q(\ZZ)\cap A|\gg \log N$.
\end{Corollary}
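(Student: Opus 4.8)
The plan is to reinterpret the representation function $r_{A+S}$ geometrically and then pull out a single popular fibre. The key observation is that for a fixed integer $n$, a pair $(a,s)$ with $a\in A$, $s\in S$ and $a+s=n$ is exactly the datum of an integer $m\in[1,\sqrt N]$ with $n-m^2\in A$: writing $s=m^2$, the number $a=n-m^2$ lies both in $A$ and on the parabola $q_n(x)=n-x^2\in\QQ[x]$. Since $x\mapsto n-x^2$ is strictly decreasing on $[1,\sqrt N]$, distinct values of $m$ produce distinct values $a$, so that
\[
|q_n(\ZZ)\cap A|\geq r_{A+S}(n).
\]
Consequently it will suffice to locate a single $n$ for which $r_{A+S}(n)\gg\log N$, as the associated quadratic $q_n$ will then witness the conclusion.

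First I would invoke Theorem \ref{MainTheorem}, which under the stated hypotheses together with $|A|\gg\sqrt N$ yields $E(A,S)\gg|A||S|\log N$. Next I would play off the two expressions for the energy against the total mass: by (\ref{Energy1}) we have $E(A,S)=\sum_n r_{A+S}(n)^2$, while counting the pairs $(a,s)\in A\times S$ directly gives $\sum_n r_{A+S}(n)=|A||S|$. Writing $R=\max_n r_{A+S}(n)$, the elementary inequality
\[
E(A,S)=\sum_n r_{A+S}(n)^2\leq R\sum_n r_{A+S}(n)=R\,|A||S|
\]
combines with the energy lower bound to force $R\gg\log N$. Choosing $n^\ast$ with $r_{A+S}(n^\ast)=R$ and setting $q(x)=n^\ast-x^2$, the first paragraph then delivers $|q(\ZZ)\cap A|\geq R\gg\log N$, which is precisely the assertion.

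I do not anticipate a genuine obstacle here, since all of the analytic difficulty is already sealed inside Theorem \ref{MainTheorem}; the corollary is essentially a pigeonholing of its conclusion. The one point that does require a moment of care is the passage from a \emph{count of representations} to a count of \emph{distinct} points of $A$ on a common curve, i.e. checking that the parametrisation $m\mapsto n-m^2$ is injective on the relevant range. This is immediate from monotonicity, but it is exactly what guarantees that large additive energy certifies many lattice points of $A$ sitting on one parabola, rather than a single difference being heavily repeated.
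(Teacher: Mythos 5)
Your proposal is correct and is essentially the paper's own argument: both invoke Theorem \ref{MainTheorem}, bound $E(A,S)=\sum_n r_{A+S}(n)^2\leq \bigl(\max_n r_{A+S}(n)\bigr)|A||S|$ via (\ref{Energy1}), and take $q(x)=n-x^2$ for a maximizing $n$. Your extra remark on injectivity of $m\mapsto n-m^2$ (so that representations give distinct elements of $A$ on the parabola) is a point the paper leaves implicit, but the route is the same.
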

\begin{proof}
Since $E(A,S)\gg |A||S|\log N$, we have by (\ref{Energy1}),
\[\log N|A||S|\ll\sum_n r_{A+S}(n)^2\leq|A||S|\max_n r(n)\] whence there are at least $c\log N$ solutions $x_a$ to
\[a=n-x_a^2\] with $a\in A$ for some $c>0$. The, in fact integral, quadratic $q(x)=n-x^2$ will suffice.
\end{proof}

Finally, it is known that dense Sidon subsets of $[1,N]$ are well-distributed modulo primes. This has been asserted in \cite{Li} and \cite{K}. We recover a similar such result here. 
\begin{Corollary}
Suppose $A$ is a Sidon set of positive integers in the interval $[1,N]$ satisfying the condition
\[|A_p|\leq \frac{p}{2}+\eps(p)\] for all primes $p$ and some uniformly bounded sequence $\eps(p)\geq 0$. Then \[|A|\ll\frac{\sqrt N}{\log N}.\]
\end{Corollary}
\begin{proof}
By Theorem \ref{MainTheorem} we have
\[E(A,S)\gg |A|(|A|+\sqrt N)\log N.\] On the other hand
\[E(A,S)\leq |A||S|+\sum_{n\neq 0}r_{A-A}(n)r_{S-S}(n)\leq (|A|+\sqrt N)\sqrt N\] since $A$ is Sidon. Rearranging gives the corollary.
\end{proof}
\section{Lemmas and Proofs}
One of the main observations is that one can do a little bit better than the Larger Sieve by considering composite moduli.
\begin{Lemma}\label{CompositeModuli}
Suppose $A$ is a set of integers such that $|A_p|\leq \frac{p}{2}+\eps(p)$ for each prime $p$. Let $\Delta:\NN\to \RR$ be the multiplicative function such that 
\[\Delta(p^k)=p^{k-1}\lr{\frac{p}{2}+\eps(p)}.\] Then for any $v\geq 1$, we have
\[\frac{|A|^2}{\Delta(v)}\leq \sum_{h\mod v}|A(v;h)|^2.\]
\end{Lemma}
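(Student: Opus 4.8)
The plan is to prove the inequality
\[
\frac{|A|^2}{\Delta(v)}\leq \sum_{h\bmod v}|A(v;h)|^2
\]
by reducing to the prime-power case and then multiplying the local estimates together using the Chinese Remainder Theorem and multiplicativity of $\Delta$. First I would establish the prime-power bound: for each prime power $p^k$, I claim
\[
|A_{p^k}|\leq \Delta(p^k)=p^{k-1}\lr{\tfrac{p}{2}+\eps(p)}.
\]
This follows because the reduction map $\ZZ/p^k\ZZ\to\ZZ/p\ZZ$ is $p^{k-1}$-to-one, and the image of $A_{p^k}$ under this map lies in $A_p$, which has at most $\frac{p}{2}+\eps(p)$ elements by hypothesis; hence $|A_{p^k}|\leq p^{k-1}|A_p|\leq\Delta(p^k)$.

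Next I would record the \emph{single-modulus} inequality. For any modulus $v$, the $|A(v;h)|$ are the fibre sizes of $A$ over $\ZZ/v\ZZ$, so $\sum_h|A(v;h)|=|A|$ and the sum $\sum_h|A(v;h)|^2$ runs only over the $h\in A_v$ for which the fibre is nonempty, a set of size $|A_v|$. By the Cauchy--Schwarz inequality applied to these $|A_v|$ nonzero terms,
\[
|A|^2=\lr{\sum_{h\in A_v}|A(v;h)|}^2\leq |A_v|\sum_{h\bmod v}|A(v;h)|^2,
\]
which rearranges to
\[
\frac{|A|^2}{|A_v|}\leq \sum_{h\bmod v}|A(v;h)|^2.
\]
Thus it suffices to show $|A_v|\leq\Delta(v)$ for every $v$, since $\Delta$ would then dominate $|A_v|$ and the stated bound follows immediately.

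The remaining point is precisely the bound $|A_v|\leq\Delta(v)$ for general $v$, and here multiplicativity does the work. Writing $v=\prod_p p^{k_p}$, the Chinese Remainder Theorem gives an injection $A_v\hookrightarrow\prod_p A_{p^{k_p}}$, since a residue class mod $v$ is determined by its residues mod each $p^{k_p}$ and each such class hit by $A$ projects into $A_{p^{k_p}}$. Therefore
\[
|A_v|\leq \prod_p |A_{p^{k_p}}|\leq \prod_p \Delta(p^{k_p})=\Delta(v),
\]
using the prime-power bound from the first step and the multiplicativity of $\Delta$. Combining with the Cauchy--Schwarz inequality above completes the proof.

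I expect the only genuinely delicate point to be verifying the prime-power estimate $|A_{p^k}|\leq p^{k-1}|A_p|$ cleanly; the rest is bookkeeping with Cauchy--Schwarz and the multiplicative structure. One should take care that the hypothesis is stated only for \emph{primes} $p$, so the prime-power and composite bounds must both be \emph{derived} from the prime case rather than assumed, which is exactly what the reduction-map and CRT arguments supply.
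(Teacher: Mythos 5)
Your proposal is correct and follows essentially the same route as the paper: the prime-power bound $|A_{p^k}|\leq p^{k-1}\lr{\frac{p}{2}+\eps(p)}$, the Chinese Remainder Theorem to get $|A_v|\leq\Delta(v)$, and Cauchy--Schwarz on the fibre sizes $|A(v;h)|$. The only difference is that you spell out the details (the $p^{k-1}$-to-one reduction map and the CRT injection) that the paper asserts without elaboration.
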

\begin{proof}
Since $|A_p|\leq \frac{p}{2}+\eps(p)$, then we also have $|A_{p^k}|\leq p^{k-1}\lr{\frac{p}{2}+\eps(p)}$. By the Chinese Remainder Theorem, it follows that $|A_v|\leq \Delta(v)$. By Cauchy-Schwarz,
\begin{align*}
|A|^2=\lr{\sum_{h\mod v}|A(v;h)|}^2&\leq |A_v|\sum_{h\mod v}|A(v;h)|^2\\
&\leq\Delta(v)\sum_{h\mod v}|A(v;h)|^2.
\end{align*}
\end{proof}

In addition, we will use an estimate for averages of multiplicative functions. This particular estimate can be found in \cite[Appendix A]{FI} as Corollary A.6.
\begin{Lemma}\label{FI}
Let $g:\NN\to \RR$ be a multiplicative function supported on square-free integers. Suppose \[g(p)=\frac{k}{p}+O\lr{\frac{1}{p^2}}\] for some integer $k\geq 1$. Then
\[\sum_{n\leq x}g(m)=\frac{\fS_g}{k!}(\log x)^k+O\lr{(\log x)^{k-1}},\]
where \[\fS_g=\prod_p\lr{1-\frac{1}{p}}^k(1+g(p)).\]
\end{Lemma}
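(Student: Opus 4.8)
The plan is to isolate the singularity of the Dirichlet series $F(s)=\sum_n g(n)n^{-s}$ by comparing it with $\zeta(s+1)^k$, and then to convert the resulting Euler-product factorization into a summatory estimate by an elementary convolution argument. Since $g$ is multiplicative and supported on squarefree integers, $F(s)=\prod_p\lr{1+g(p)p^{-s}}$, and I would write
\[F(s)=\zeta(s+1)^k\,G(s),\qquad G(s)=\prod_p\lr{1+g(p)p^{-s}}\lr{1-p^{-1-s}}^k.\]
Denoting by $h$ the multiplicative function with $G(s)=\sum_m h(m)m^{-s}$, and recalling that $\zeta(s+1)^k=\sum_d\frac{\tau_k(d)}{d}\,d^{-s}$ is the Dirichlet series of $d\mapsto\tau_k(d)/d$ (with $\tau_k$ the $k$-fold divisor function), this factorization is precisely the convolution identity
\[g(n)=\sum_{dm=n}\frac{\tau_k(d)}{d}\,h(m).\]

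The second step is to show that $h$ decays rapidly, so that $G$ extends as an absolutely convergent Euler product slightly to the left of $\Re(s)=0$. Expanding the local factor of $G$ in powers of $p^{-s}$, the coefficient of $p^{-s}$ is $h(p)=g(p)-k/p$, which by hypothesis is $O(1/p^2)$, while the higher coefficients satisfy $h(p^j)=O_k(p^{-j})$ for $j\geq1$. Consequently $\sum_m|h(m)|m^\sigma<\infty$ for any fixed $\sigma\in(0,1/2)$, and in particular $\sum_m h(m)=G(0)=\prod_p\lr{1-\tfrac1p}^k(1+g(p))=\fS_g$, while every moment $\sum_m|h(m)|(\log m)^i$ converges. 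The crucial cancellation here is $h(p)=g(p)-k/p=O(1/p^2)$, which is exactly what the hypothesis $g(p)=k/p+O(1/p^2)$ is designed to produce.

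For the assembly I would insert the classical estimate $L(y):=\sum_{d\leq y}\tau_k(d)/d=\frac{1}{k!}(\log y)^k+O((\log y)^{k-1})$, valid for $y\geq2$, into the convolution. Since the inner sum vanishes once $m>x$, this gives
\[\sum_{n\leq x}g(n)=\sum_{m\leq x}h(m)\,L(x/m).\]
Replacing $L(x/m)$ by its asymptotic and expanding $(\log(x/m))^k=\sum_{i=0}^k\binom{k}{i}(\log x)^{k-i}(-\log m)^i$, the term $i=0$ produces $\frac{(\log x)^k}{k!}\sum_m h(m)=\frac{\fS_g}{k!}(\log x)^k$ once the sum over $m$ is completed (the tail $m>x$ costing only $O\lr{(\log x)^k x^{-\sigma}}$ by absolute convergence). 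The terms with $i\geq1$ are bounded by $(\log x)^{k-1}\sum_m|h(m)|(\log m)^i\ll(\log x)^{k-1}$, and the accumulated error from $L$ contributes $\ll(\log x)^{k-1}\sum_m|h(m)|$. Collecting everything yields $\sum_{n\leq x}g(n)=\frac{\fS_g}{k!}(\log x)^k+O((\log x)^{k-1})$.

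The two genuinely substantive inputs are the verification that $G$ converges past $\Re(s)=0$, which rests on the cancellation noted above, and the auxiliary estimate for $\sum_{d\leq y}\tau_k(d)/d$; the latter I would deduce by partial summation from Landau's bound $\sum_{d\leq y}\tau_k(d)=\frac{y(\log y)^{k-1}}{(k-1)!}+O\lr{y(\log y)^{k-2}}$ (itself provable by induction on $k$ via the hyperbola method). I expect the main bookkeeping obstacle to be keeping the error terms uniform across the range of $m$: the interchange of summation and all moment bounds are licensed by the absolute convergence $\sum_m|h(m)|m^\sigma<\infty$, so the argument reduces to confirming that no factor of $\log x$ beyond the $(k-1)$st accumulates from either the divisor-sum error or the tail over $m$.
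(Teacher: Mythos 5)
The paper never proves this lemma: it is imported verbatim from Friedlander--Iwaniec \cite[Appendix A, Corollary A.6]{FI}, so there is no internal proof to compare against, and your argument must be judged as a self-contained substitute. On that basis it is correct, and it is the standard route: factor the Dirichlet series as $F(s)=\zeta(s+1)^kG(s)$, verify that the hypothesis $g(p)=k/p+O(1/p^2)$ produces the cancellation $h(p)=g(p)-k/p=O(1/p^2)$ together with $h(p^j)=O_k(p^{-j})$ (and $h(p^j)=0$ for $j>k+1$), conclude $\sum_m|h(m)|m^\sigma<\infty$ for fixed $\sigma\in(0,1/2)$, and convolve against the elementary estimate for $L(y)=\sum_{d\le y}\tau_k(d)/d$; all of these checks go through as you describe, and the deferred inputs (Landau's bound for $\sum_{d\le y}\tau_k(d)$, partial summation) are genuinely routine. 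One bookkeeping point does need patching: the asymptotic $L(y)=\frac{1}{k!}(\log y)^k+O((\log y)^{k-1})$ fails for $1\le y<2$ when $k\ge 2$, since there $L(y)=1$ while both $(\log y)^k$ and $(\log y)^{k-1}$ vanish as $y\to 1^+$; so in the sum $\sum_{m\le x}h(m)L(x/m)$ you must split off the range $x/2<m\le x$, where $L(x/m)=1$ and the contribution is $O\lr{\sum_{m>x/2}|h(m)|}=O(x^{-\sigma})$, harmless. With that split, the main term $\frac{\fS_g}{k!}(\log x)^k$ and the $O((\log x)^{k-1})$ error emerge exactly as you compute. A final remark on what each approach buys: your $\zeta(s+1)^k$-factorization argument never uses nonnegativity of $g$, so it is somewhat more robust than iteration-type (Levin--Fainleib style) mean-value arguments; for the application in this paper that generality is not needed, since the relevant function $\mu(n)^2/\Delta(n)$ is nonnegative, which is presumably why the author was content to quote the reference.
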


\begin{Corollary}\label{AverageDelta}
Let $\Delta:\NN\to \RR$ be the multiplicative function such that 
\[\Delta(p)=\frac{p}{2}+\eps(p)\] for all primes $p$ and some uniformly bounded sequence $\eps(p)\geq 0$. Then,
\[\sum_{n\leq x}\frac{n^2}{\Delta(n)}\gg x^2\log x.\]
\end{Corollary}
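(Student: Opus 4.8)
The plan is to bound the sum below by its squarefree part, recognize the resulting multiplicative function as one to which Lemma \ref{FI} applies with $k=2$, and then strip off the weight $n^2$ by partial summation. The subtle feature, which I will flag below, is that the top-order term cancels in exactly the way that makes $x^2\log x$ (rather than $x^2(\log x)^2$) the true size.

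First I would discard nonnegative terms and keep only squarefree $n$:
\[\sum_{n\le x}\frac{n^2}{\Delta(n)}\ \ge\ \sum_{\substack{n\le x\\ n\text{ squarefree}}}\frac{n^2}{\Delta(n)}\ =\ \sum_{n\le x}n^2 g(n),\]
where $g$ is the multiplicative function supported on squarefree integers with $g(n)=1/\Delta(n)$ there. The purpose of this reduction is that on primes, using only that $\eps(p)$ is bounded,
\[g(p)=\frac{1}{\Delta(p)}=\frac{1}{\tfrac{p}{2}+\eps(p)}=\frac{2}{p}+O\lr{\frac{1}{p^2}}.\]
Hence $g$ meets the hypothesis of Lemma \ref{FI} with $k=2$, and setting $G(t)=\sum_{n\le t}g(n)$ we get $G(t)=\tfrac{\fS_g}{2}(\log t)^2+O(\log t)$ with $\fS_g=\prod_p(1-1/p)^2(1+g(p))>0$ (each local factor is $1+O(1/p^2)$, so the product converges to a positive number).

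Next I would apply Abel summation to remove the weight,
\[\sum_{n\le x}n^2 g(n)=x^2 G(x)-\int_1^x 2t\,G(t)\,dt,\]
and substitute the asymptotic for $G$. Using $\int_1^x t(\log t)^2\,dt=\tfrac{x^2}{2}(\log x)^2-\tfrac{x^2}{2}(\log x)+O(x^2)$, the two copies of $\tfrac{\fS_g}{2}x^2(\log x)^2$ cancel, and the term surviving at next order is $+\tfrac{\fS_g}{2}x^2\log x$. Since $\fS_g>0$, this is of the claimed order $\gg x^2\log x$, and it matches the fact that the Dirichlet series $\sum_n n^2 g(n)\,n^{-s}=\sum_n g(n)\,n^{-(s-2)}$ has a double pole at $s=2$, whose residue against $x^s/s$ produces exactly $\tfrac{\fS_g}{2}x^2\log x$.

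The delicate point — and the main obstacle — is that this cancellation occurs one order above the target, so the $O(\log t)$ remainder in Lemma \ref{FI} feeds through the partial summation as a term of size $O(x^2\log x)$, the same size as the main term I am trying to isolate. To close the argument cleanly I would use the estimate for $G$ to one further order, namely $G(t)=\tfrac{\fS_g}{2}(\log t)^2+c\log t+O(1)$, which is what the cited Corollary A.6 of \cite{FI} in fact provides; then the error contributions to the Abel summation collapse to $O(x^2)$ and the bound $\gg x^2\log x$ follows. Equivalently, one can run a short-interval version, $\sum_{n\le x}n^2 g(n)\ge\tfrac{x^2}{4}\lr{G(x)-G(x/2)}$, and observe $G(x)-G(x/2)=\fS_g\log 2\cdot\log x+O(1)\gg\log x$, which again hinges on retaining the $\log x$-order term in the estimate for $G$.
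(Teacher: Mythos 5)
You set up the problem exactly as the paper does---restrict to squarefree $n$, observe $1/\Delta(p)=2/p+O(1/p^2)$ so that Lemma \ref{FI} applies with $k=2$, then exploit that the weight $n^2$ concentrates the sum near $n=x$---and you correctly identified the crux: the error $O(\log t)$ in Lemma \ref{FI} is of the same order as the quantity you are after. But your resolution of that crux has a genuine gap. Both of your routes (the Abel summation and the $[x/2,x]$ short interval) require the refined expansion $G(t)=\tfrac{\fS_g}{2}(\log t)^2+c\log t+O(1)$, and you justify it only by asserting that Corollary A.6 of \cite{FI} \qu{in fact provides} it. It does not: the result, as quoted in Lemma \ref{FI}, gives just the main term plus $O((\log t)^{k-1})$, with no secondary term. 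Such a refinement is true (it can be extracted from a Perron-formula or Selberg--Delange argument, since the relevant Dirichlet series is $\zeta(s+1)^2$ times a function holomorphic near $s=0$), but it is a strictly stronger input that you would have to prove rather than cite. Without it, your short-interval bound only reads $G(x)-G(x/2)=\fS_g\log 2\cdot\log x+O(\log x)$, and since the absolute implied constant in the $O(\log x)$ may well exceed $\fS_g\log 2$, no lower bound follows; the Abel-summation route fails for the same reason, as you yourself observed.

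The fix is elementary, and it is precisely the paper's device: run your short-interval argument on $[x/C,x]$ with $C$ a \emph{large} constant rather than $C=2$. Lemma \ref{FI} applied at $x$ and at $x/C$ gives
\[G(x)-G(x/C)=\fS_g\log C\cdot\log x-\tfrac{\fS_g}{2}(\log C)^2+O(\log x)=\log x\lr{\fS_g\log C+O(1)},\]
where the implied constant in the final $O(1)$ is absolute, independent of $C$ (for $x$ large compared to $C$). Since $\fS_g>0$, you may fix $C$ so large that $\fS_g\log C+O(1)\geq 1$, whence
\[\sum_{n\leq x}\frac{n^2}{\Delta(n)}\geq\lr{\frac{x}{C}}^2\lr{G(x)-G(x/C)}\gg x^2\log x.\]
In short, the dilation parameter $C$ is the lever that makes the weak error term of Lemma \ref{FI} suffice; no refinement of the lemma is needed, and your argument closes once you replace $2$ by a sufficiently large constant.
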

\begin{proof}
Since $\Delta$ is non-negative, we get a lower bound by summing over only those $n$ which are square-free. We have \[\Delta(p)^{-1}=\frac{2}{p}-\frac{4\eps(p)}{p(p+2\eps(p))}=\frac{2}{p}+O\lr{\frac{1}{p^2}}\]
so that if \[M(x)=\sum_{n\leq x}\frac{\mu(n)^2}{\Delta(n)}\] then Lemma \ref{FI} gives
\[M(x)=\frac{\fS_\Delta}{2}(\log x)^2+O\lr{\log x}.\]
Here, \[\fS_\Delta=\prod_p\lr{1-\frac{1}{p}}^2\lr{1+\frac{2}{p}+O\lr{\frac{1}{p^2}}}=\prod_p\lr{1+O\lr{\frac{1}{p^2}}}\] is a convergent product.
Again, since $\Delta(n)$ is non-negative, we are free to include only the terms with $n\geq x/C$ for some $C>1$. Thus
\begin{align*}
\sum_{n\leq x}\frac{n^2\mu(n)^2}{\Delta(n)}&\geq \sum_{x/C\leq n\leq x}\frac{n^2\mu(n)^2}{\Delta(n)}\\
&\geq \lr{x/C}^2\lr{M(x)-M(x/C)}\\
&\geq \lr{x/C}^2\log x\lr{\fS_\Delta\log C+O(1)}.
\end{align*}
Since $\fS_\Delta$ converges, if $C$ is large enough then $\fS_\Delta\log C+O(1)\geq 1$, and the result is proved.
\end{proof}

\begin{Lemma}\label{DivisorsLowerBound}
Let $A\subset[1,N]$ be a set of integers with \[|A_p|\leq\frac{p}{2}+\eps(p)\] for all primes $p$ and some uniformly bounded sequence $\eps(p)\geq 0$. Then
\[\sum_{1\leq u<v\leq \sqrt N}r_{A-A}(uv)\gg|A|^2\log N.\]
\end{Lemma}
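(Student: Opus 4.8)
The plan is to reduce the target to the composite-modulus second-moment bound of Lemma \ref{CompositeModuli} by \emph{localizing}. The point is that the constraint $u<v$ says exactly that the difference $uv=a-a'$ is smaller than $v^2$; so I would partition $[1,N]$ into blocks of length $v^2$ and count only those pairs lying in a common block, which automatically produces a factor $u<v$.

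First, fix a modulus $v\le\sqrt N$ and split $[1,N]$ into $\lceil N/v^2\rceil$ consecutive intervals $I$ of length $v^2$. If $a,a'$ lie in a common block then $0<a-a'<v^2$, so whenever $v\mid a-a'$ we have $a-a'=uv$ with $1\le u<v$ forced. Any subset of $A$ inherits the hypothesis, since $|(A\cap I)_p|\le|A_p|\le p/2+\eps(p)$, so Lemma \ref{CompositeModuli} applies to each $A\cap I$ and gives
\[\frac{|A\cap I|^2}{\Delta(v)}\le\sum_{h\bmod v}|(A\cap I)(v;h)|^2=|A\cap I|+2\sum_{1\le u<v}r_{(A\cap I)-(A\cap I)}(uv).\]
Summing over the disjoint blocks, bounding $\sum_I|A\cap I|^2$ below by Cauchy--Schwarz via $(\sum_I|A\cap I|)^2/\lceil N/v^2\rceil\gg|A|^2v^2/N$, and using that same-block pairs form a subset of all pairs (so $\sum_I r_{(A\cap I)-(A\cap I)}(uv)\le r_{A-A}(uv)$), I would obtain
\[\sum_{1\le u<v}r_{A-A}(uv)\gg\frac{|A|^2v^2}{N\,\Delta(v)}-|A|.\]

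Next I would sum over $v\le\sqrt N$. The main term is $\frac{|A|^2}{N}\sum_{v\le\sqrt N}v^2/\Delta(v)$, and Corollary \ref{AverageDelta} with $x=\sqrt N$ gives $\sum_{v\le\sqrt N}v^2/\Delta(v)\gg N\log N$. This is exactly where the shape $n^2/\Delta(n)$ of that corollary is used: the factor $v^2$ is the reciprocal of the number of blocks. Hence the target is $\gg|A|^2\log N$, up to the accumulated diagonal error.

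The step I expect to be the main obstacle is controlling that diagonal error. Each application of Lemma \ref{CompositeModuli} discards the term $|A\cap I|$, and these sum to $|A|$ per modulus and thus to $O(|A|\sqrt N)$ after summing over $v\le\sqrt N$. This is harmless precisely in the operative range $|A|\gg\sqrt N$, where $|A|\sqrt N\ll|A|^2\log N$; note also that the localization is only nontrivial when the typical block occupancy $|A|v^2/N$ exceeds $\Delta(v)\asymp v/2$, which again forces $|A|$ to have order $\sqrt N$ for the decisive moduli $v$ near $\sqrt N$. I would therefore carry out the argument in that range and absorb the $O(|A|\sqrt N)$ term, observing that it is exactly the $|A||S|$ contribution that appears separately in Theorem \ref{MainTheorem}.
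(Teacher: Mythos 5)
Your proof is, in structure, identical to the paper's: for each $v\leq \sqrt N$ you partition $[1,N]$ into blocks of length $v^2$ so that congruent same-block pairs automatically have difference $uv$ with $1\leq u<v$, apply Lemma \ref{CompositeModuli} block by block, recombine the blocks with Cauchy--Schwarz to produce the factor $v^2|A|^2/N$, and sum over $v$ via Corollary \ref{AverageDelta}.

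The one point of divergence is the diagonal, and there you are more careful than the paper. The paper's identity (\ref{Identity}) imposes $b\leq a$ rather than $b<a$, so the pairs $a=b$ (which correspond to $u=0$, not to any $1\leq u<v$) enter its right-hand side; what the paper's chain of inequalities actually bounds from below is this diagonal-inclusive count, and the stated conclusion only follows after subtracting exactly the $O(|A|\sqrt N)$ error you track explicitly. Moreover, your restriction to the range $|A|\gg\sqrt N$ is genuinely necessary, not a defect of your bookkeeping: the lemma as stated fails for small sets. For instance, $A=\{1,q+1\}$ with $q$ a prime in $(\sqrt N,N)$ satisfies $|A_p|\leq 2\leq \frac{p}{2}+\eps(p)$ with $\eps(p)\leq 1$, yet $q$ admits no factorization $uv$ with $1\leq u<v\leq\sqrt N$, so the left-hand sum vanishes while $|A|^2\log N$ does not. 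The intermediate inequality you derive before absorbing the error, namely
\[\sum_{1\leq u<v\leq\sqrt N}r_{A-A}(uv)\geq c\,|A|^2\log N-C|A|\sqrt N\]
for constants $c,C>0$ depending only on the bound for $\eps$, holds for all $A$ and is the correct form of the statement; as you observe, it is all that the proof of Theorem \ref{MainTheorem} requires, since the loss there is covered by the $|A||S|$ term (together with the trivial bound $E(A,S)\geq|A||S|$ when $|A|$ is small).
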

\begin{proof}
Observe that $a-b=uv$ with $1\leq u<v$ if and only if $a\equiv b\mod v$ and $b\leq a< b+v^2$, so that
\begin{equation}\label{Identity}
\sum_{1\leq u<v\leq \sqrt N}r_{A-A}(uv)=\sum_{1\leq v\leq \sqrt N}\sum_{\substack{a,b\in A\\ a\equiv b\mod v\\b\leq a< b+v^2}}1.
\end{equation}
For fixed $v$, we divide the interval $[1,N]$ into $J_v=[N/v^2]$ intervals $I_j=[jv^2,(j+1)v^2)$ of length $v^2$ (the last interval may be shorter). If $a,b$ are elements of $A\cap I_j$ with $b\leq a$ and $a$ congruent $b$ modulo $v$, then they are counted in the inner sum of (\ref{Identity}). Thus, we produce a partition of $A$ into sets $A_j=A\cap I_j$ and deduce that
\begin{align*}
\sum_{1\leq u<v\leq \sqrt N}r_{A-A}(uv)&\gg \sum_{1\leq v\leq \sqrt N}\sum_{1\leq j\leq J_v}\sum_{h\mod v}|A_j(v;h)|^2\\
&\geq\sum_{1\leq v\leq \sqrt N}\frac{1}{\Delta(v)}\sum_{1\leq j\leq J_v}|A_j|^2,
\end{align*}
having applied Lemma \ref{CompositeModuli} to each of the sets $A_j$.
By Cauchy-Schwarz,
\[\sum_{1\leq j\leq J_v}|A_j|^2\geq\frac{|A|^2}{J_v}\geq \frac{v^2|A|^2}{N}\] so we have deduced
\[\sum_{1\leq u<v\leq \sqrt N}r_{A-A}(uv)\gg\frac{|A|^2}{N}\sum_{1\leq v\leq \sqrt N}\frac{v^2}{\Delta(v)}.\]
The lemma now follows from Corollary \ref{AverageDelta}.
\end{proof}

\begin{proof}[Proof of Theorem \ref{MainTheorem}]
By passing to a subset of $A$, we may assume that all elements of $A$ lie in a single congruence class modulo $4$. The cost of this is that $|A|$ decreases by a factor of at most $4$. The additive energy between $A$ and $S$ is
\begin{align*}
E(A,S)&=\sum_{m^2,n^2\in S}r_{A-A}(n^2-m^2)\\
&=|A||S|+2\sum_{1\leq m^2<n^2\leq N}r_{A-A}((n-m)(n+m)),
\end{align*}
where in the second line we have extracted the contribution from $r_{A-A}(0)$ and used that $r_{A-A}(k)=r_{A-A}(-k)$.
Now, for $k\neq 0$, $k=n^2-m^2$ has a solution if and only if $k\neq 2\mod 4$, and its solutions are indexed by pairs $(u,v)$ where $uv=k$, $u<v$, and $u\equiv v\mod 2$. This can be seen by considering the system
\[n-m=u,\ n+m=v\] which has as its solution
\[n=\frac{v+u}{2},\ m=\frac{v-u}{2}.\]
Thus
\[E(A,S)=|A||S|+2\sum_{\substack{1\leq u<v\leq \sqrt N\\u\equiv v\mod 2}}r_{A-A}(uv).\]
Because all elements of $A$ are in a single congruence class modulo $4$, $A-A$ is supported only on numbers which are divisible by $4$. Thus if $a-b=n=uv$ for some $a,b\in A$, either $u\equiv v\mod 2$ or else one of $(u/2,2v)$ or $(2u,v/2)$ is going to satisfy the necessary congruence condition. Thus, by reducing our count of pairs $(u,v)$ by a factor of at most $4$, we can remove the condition $u\equiv v\mod 2$, and we obtain a lower bound 
\[E(A,S)\geq \frac{1}{2}\sum_{1\leq u<v\leq \sqrt N/2}r_{A-A}(uv).\] The proof is complete after an application of Lemma \ref{DivisorsLowerBound}.
\end{proof}


\begin{thebibliography}{HD}
\bibitem[BC]{BC} R. C. Bose and S. Chowla, \emph{Theorems in the additive theory of numbers}, Comment. Math. Helv. 37 (1962-63), 141-147.
\bibitem[CL]{CL} E. S. Croot, III and V. F. Lev, \emph{Open problems in additive combinatorics}, In Additive combinatorics, volume 43 of CRM Proc. Lecture Notes, pages 207-233. Amer. Math. Soc., Providence, RI, 2007.
\bibitem[FI]{FI} J. Friedlander and H. Iwaniec, \emph{Opera de cribro}, American Mathematical Society Colloquium Publications, 57. American Mathematical Society, Providence, RI, 2010.
\bibitem[GH]{GH} B. Green and A. J Harper, \emph{Inverse questions for the large sieve}, Geom. Funct. Anal., 24(4):1167-1203,
2014.
\bibitem[HV]{HV} H.A. Helfgott and A. Venkatesh, \emph{How small must ill-distributed sets be?}, Analytic Number Theory, Essays in honour of Klaus Roth. Cambridge University Press 2009, 224-234.
\bibitem[K]{K} M. N. Kolountzakis, \emph{On the uniform distribution in residue classes of dense sets of integers with distinct sums}, J. Number Theory 76 (1999), 147-153. 
\bibitem[La]{La} E. Landau, \emph{\"Uber die Einteilung der positiven ganzen Zahlen in vier Klassen nach der Mindeszahl der zu ihrer additiven Zusammensetzung erforderlichen Quadrate}, Arch. Math. Phys. 13, 305-312, 1908.
\bibitem[Li]{Li} B. Lindstr\"om, \emph{Well distribution of Sidon sets in residue classes}, J. Number Theory 69 (1998), 197-200.
\bibitem[M]{M} H. L. Montgomery, \emph{A note on the large sieve}, J. London Math. Soc. 43 (1968), 93-98.
\bibitem[R]{R} S. Ramanujan, \emph{Some formulae in the analytic theory of numbers}, Messenger Math. 45 (1916), 81-84. 
\bibitem[W1]{W1} M. N. Walsh, \emph{The inverse sieve problem in high dimensions}, Duke Math. J. 161 (2012), no. 10, 2001-2022. 
\bibitem[W2]{W2} M. N. Walsh, \emph{The algebraicity of ill-distributed sets}, Geom. Funct. Anal. 24 (2014), no. 3, 959-967. 
\baselineskip=17pt

%%%%%%%%%%%%%%%
\end{thebibliography}
\end{document}